\newtheorem{theorem}{Theorem}[section]
\newtheorem{lemma}[theorem]{Lemma}
\newtheorem{proposition}[theorem]{Proposition}
\newtheorem{corollary}[theorem]{Corollary}
\newtheorem{conjecture}[theorem]{Conjecture}
\theoremstyle{remark}
\newtheorem{remark}[theorem]{Remark}
\numberwithin{equation}{section}
\newcommand {\R} {\mathbb{R}}
\newcommand {\Z} {\mathbb{Z}}
\newcommand{\F} {\mathcal{F}}
\renewcommand {\P} {\mathbb{P}}
\renewcommand {\S} {\mathcal{S}}
\newcommand {\E} {\mathbb{E}}
\newcommand {\Var} {\mathrm{Var}\,}
\renewcommand{\d}{\text{d}}
\def\br#1{\left(#1\right)}
\begin{document}

\title[]{Coupling of stationary fields with application to arithmetic waves}
\author{Dmitry Beliaev\textsuperscript{1}}
\address{\textsuperscript{1}Mathematical Institute, University of Oxford}
\email{belyaev@maths.ox.ac.uk}
\author{Riccardo W. Maffucci\textsuperscript{1,2}}
\address{\textsuperscript{2}Current address: EPFL}
\email{riccardo.maffucci@epfl.ch}
\thanks{Both authors were supported by the Engineering \& Physical Sciences Research Council (EPSRC) Fellowship EP/M002896/1.
\\
R. M. was supported by Swiss National
Science Foundation project 200021\_184927}

\date{\today}
\begin{abstract}
In this paper we obtain a range of quantitative results of the following type: given two centered Gaussian fields with close covariance kernels we construct a coupling such that the fields are uniformly close on some compact with probability very close to one. As an application we show that it is possible to couple arithmetic random waves so that they converge locally uniformly to the random plane wave and estimate the rate of convergence. 

\end{abstract}
\maketitle

\section{Introduction}

In many problems involving Gaussian fields one quite often needs a quantitative statement of the following kind: if two fields have similar structure, then it is possible to couple them in such a way that they are close to each other with very high probability. There are many different custom-made statements of this type that depend on a particular notion of `similarity' and `closeness'.

Most of these results treat `similarity' in terms of the covariance kernel. One of the main claims of this paper is that in many cases it is more productive to think in terms of the spectral measure. This helps to treat the cases of singular spectral measures which is hard to analyse in terms of kernels and not amenable to white noise decomposition methods. 

Before discussing the results, we would like to explain what do we mean by the `closeness'. There are many ways to define it in the context of Gaussian fields. We are interested in uniform estimates that are naturally required for many analytic techniques. To be more precise, we are interested in results of the following type: given two $C^k$ smooth fields $f_1$ and $f_2$ we want to couple them in such a way that for a given domain $\Omega$, the norm $\|f_1-f_2\|_{C^k(\Omega)}$ is small with large probability. Below we will discuss various ways to quantify this statement.


The paper is organized as follows: in Section \ref{s: background} we introduce some necessary background, namely we discuss the white noise representation of Gaussian fields and the large deviation results for the $C^k$ norm of a field. In Section \ref{s: smooth case} we show that if spectral measures of two fields are continuous and their densities are close in a certain sense, then the fields can be coupled in such a way that their difference is a \emph{stationary} Gaussian field with small variance. The variance is given by Theorem \ref{thm:spectral density}. Combining this with Corollary \ref{cor: norm of the field} we obtain a quantified result about the $C^k$ norm of the difference.  In Section \ref{s: general case} we deal with the case when the spectral measures are singular with respect to each other. In particular, it means that they are not close in the sense mentioned above, so the results of Section \ref{s: smooth case} are not applicable. In this case it is possible to formulate a different notion of closeness of the spectral measure. We show that if the spectral measures are sufficiently close, then the fields can be coupled in such a way that their difference is a \emph{non-stationary} field such that its variance is small in a large ball. Variance of this coupling is estimated in Theorem \ref{thm: general coupling}. Finally, in Section \ref{s: application} we introduce a particularly important family of Gaussian fields: random arithmetic waves. In this section we quantify how they converge to the limiting random plane wave. This application was the main motivation for this paper. 

\subsection{Acknowledgement} We are very grateful to Alejandro Rivera for helpful conversations. 

\section{Background}
\label{s: background}

\subsection{White noise representation}
In this note we are interested in a particular setup which appears quite often: smooth stationary fields on $\R^n$. There are two standard ways to describe such fields. Usually, a stationary field is described by its covariance kernel 
\[
K(x)=\E[f(0)f(x)]=\E[f(y)f(y+x)].
\]
Alternatively, by Bochner's theorem, there is a positive symmetric measure $\rho$, called the \emph{spectral measure}, such that
\[
K(x)=\int_{\R^n}e^{2\pi i x\cdot t} \d\rho( t). 
\]
Here, and everywhere below, we assume that $f$ is $k$ times differentiable, $K$ is $2(k+1)$ times differentiable, and $\rho$ has finite moments of order up to $2(k+1)$.

The spectral measure gives the alternative characterization of the field. Moreover, the field can be explicitly constructed in terms of the spectral measure. Roughly speaking, the field is the Fourier transform of the white noise with respect to the spectral measure. Here we give a brief summary of the constructions and refer to \cite[A.12]{NS16}.

Let $L^2_H(\rho)$ be the space of all complex-valued Hermitian functions $h$ (i.e. $h(-x)=\bar{h}(x)$) in $\R^n$ such that $\int |h^2|\d \rho<\infty$ (this space is over $\R$). This is a Hilbert space with the usual scalar product $\langle h_1,h_2\rangle=\int h_1 \bar{h}_2 \d \rho$. Note that the symmetry condition implies that the scalar product is always real. The Fourier transform of $L^2_H(\rho)$ is the Hilbert space $\mathcal{H}_K$ such that $K_x(y)=K(x-y)$ is its reproducing kernel. Given an orthonormal basis $\phi_k(x)$ in $\mathcal{H}_K$, the field can be written as $\sum a_k \phi_k(x)$, where $a_k$ are i.i.d. $N(0,1)$ random variables. This series converges locally uniformly, together with its first $k$ derivatives (see \cite[A.5]{NS16}).

The series above can be thought of as the white noise in $\mathcal{H}_K$. The white noise in a Hilbert space $H$ is a collection of centered normal variables indexed by functions from the space (denoted $W(h)$) such that $\E W(h_1)W(h_2)=\langle h_1,h_2\rangle_H$.  The white noise can be informally thought of as $\sum a_k \phi_k$ where $a_k$ are i.i.d. $N(0,1)$ random variables and $\phi_k$ form an orthonormal basis in $H$. Note that in the case of $\mathcal{H}_K$ this series converges locally uniformly.

Since, by construction, the Hilbert structure in $\mathcal{H}_K$ is inherited from $L^2_H(\rho)$, the white noise in $\mathcal{H}_K$ is the Fourier transform of the white noise in $L^2_H(\rho)$, which can be written as $\sum a_k \psi_k$ where $a_k$ as above and $\{\psi_k\}$ is an o.n.b. in $L^2_H(\rho)$. Note that this series is just a formal expression, since, unlike the case above, it does not converge point-wise and should be understood in distributional sense. For this reason, it is more constructive to define the white noise in $L^2_H(\rho)$ as a collection or random variables $W(h)$, $h\in L^2_H(\rho)$. 

With these notations, we can write the stationary field in terms of the white noise in $L^2_H(\rho)$
\begin{equation}
\label{eq:field white noise}
\begin{aligned}
f(x)= W(e^{2\pi i x\cdot t})
\end{aligned}
\end{equation}
which is well defined since $h(t)=\exp(2\pi i x\cdot t)\in L^2_H(\rho)$ for any $x\in\R^n$.

\subsection{$C^k$ norm of a field}

In this paper we will provide several results of the following type: given two $C^k$-smooth centered fields such that their spectral measures are close in some sense, there is a jointly Gaussian coupling of the fields, such that the $C^k$ norm of the difference is small. Note, that if the fields are jointly Gaussian, their difference is a centered Gaussian field. This means that we need estimates of the supremum of a Gaussian field and its derivatives. 


This is a rather standard problem. Roughly speaking, at each point the probability that the field is large has Gaussian tails, but the supremum also depends on how fast the field oscillates. The precise estimate is given by he following lemma:

\begin{lemma}[{cf. \cite[Lemma 3.12]{MV}}]  
\label{l:Borell-TIS}Let $\Omega\subset \R^n$ be a domain that can be covered by $N$ discs of radius $1$. Let $f$ be a centered Gaussian field defined in $V$ -- the $1$-neighborhood  of $\Omega$. Let $K(x,y)=\E f(x)f(y)$ be the covariance kernel of $f$. We assume that $K$ is $k+1$ times differentiable with respect to each variable. We define 
\[
\sigma^2=\sup_{x\in V} \max_{|\alpha|\le k+1} \E (\partial^\alpha f(x))^2=
\sup_{x\in V} \sup_{|\alpha|\le k+1} \partial^\alpha_x\partial^\alpha_y K(x,y)|_{y=x}.
\]

There is a universal constant $c_1>0$ (which only depends on the dimension $n$ and the fact that we use unit balls) such that, for every $A>c_1$ we have that
\begin{equation}
\label{eq: general probability bound}
\P\br{\|f\|_{C^k(\Omega)}>A\sigma}\le \exp\br{\log N-(A-c_1)^2/2}.
\end{equation}
In particular, for large $A$ the upper bound is essentially $\exp(-A^2/2)$.
\end{lemma}

\begin{proof}
By the quantified version of the Kolmogorov theorem \cite[A.9]{NS16} there is an absolute constant $c_1$ such that
\[
\|f\|_{C^k(B(x,1))}=\sup_{|x-y|\le 1}\sup_{|\alpha|\le k} |\partial^\alpha f(y)|\le c_1 \sigma, \quad \forall x\in \Omega.
\]
Applying the Borell-TIS inequality to each partial derivative of order up to $k$ and using the union bound we obtain
\[
\P\br{\|f\|_{C^k(B(x,1))}>c_1\sigma+u}\le c_{k,d}e^{-u^2/2c_1^2\sigma^2},
\]
where $c_{k,d}=1+d+\dots+d^k$ is the number of partial derivatives (we ignore that some of them are equal). Covering $\Omega$ by $N$ discs of radius $1$ and using the union bound we have
\[
\P\br{\|f\|_{C^k(\Omega)}>c_1\sigma+u}\le c_{k,d}Ne^{-u^2/2c_1^2\sigma^2}.
\]
Taking $u=(A-c_1)\sigma$ we have
\[
\P\br{\|f\|_{C^k(\Omega)}>A\sigma}\le \exp\br{\log(c_{k,d} N)-(A-c_1)^2/2c_1^2}.
\]
\end{proof}

For a field defined in the entire space we immediately have the following corollary.
\begin{corollary}
\label{cor: norm of the field}
Let $f$ be a Gaussian field in $\R^n$ and let 
\begin{equation}
\label{eq: sigma_R}
\sigma^2_R=\sup_{|x|\le R+1}\max_{|\alpha|\le k+1}\E(\partial^\alpha f(0))^2.
\end{equation}
Then there is an absolute constant $c_2=c_2(n)$ such that 
\[
\P\br{\|f\|_{C^k(B_R)}>c\sigma_R\log R}\le \exp\br{-(c\log R)^2/16}
\]
assuming that $c\log R>2c_1$, where $c_1$ as in Lemma \ref{l:Borell-TIS}, and $R>c_2$.
\end{corollary}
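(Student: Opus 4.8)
The plan is to derive this directly from Lemma~\ref{l:Borell-TIS} applied to the ball $\Omega = B_R$, whose $1$-neighbourhood is $V = B_{R+1}$. With this choice the quantity $\sigma^2$ appearing in the lemma, namely $\sup_{x\in B_{R+1}}\max_{|\alpha|\le k+1}\E(\partial^\alpha f(x))^2$, coincides exactly with $\sigma_R^2$ defined in \eqref{eq: sigma_R}. Thus no loss is incurred in passing from the lemma's $\sigma$ to $\sigma_R$, and the whole task reduces to controlling the two ingredients on the right-hand side of \eqref{eq: general probability bound}: the covering number $N$ and the Gaussian tail exponent.

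For the covering number, a ball of radius $R$ in $\R^n$ can be covered by $N \le C_n R^n$ discs of radius $1$, for a constant $C_n$ depending only on $n$; hence $\log N \le \log C_n + n\log R$ grows only logarithmically in $R$. Next I would apply \eqref{eq: general probability bound} with $A = c\log R$, so that $A\sigma = c\sigma_R\log R$ is precisely the threshold in the statement. The hypothesis $c\log R>2c_1$ guarantees $A > c_1$, so the lemma applies, and moreover $A - c_1 > A/2$, which lets me replace the awkward factor $(A-c_1)^2$ by the clean lower bound $(A-c_1)^2/2 > A^2/8 = (c\log R)^2/8$.

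Combining these gives
\[
\P\br{\|f\|_{C^k(B_R)} > c\sigma_R\log R} \le \exp\br{\log C_n + n\log R - (c\log R)^2/8}.
\]
It then remains to absorb the subquadratic term $\log C_n + n\log R$ into half of the quadratic term: since $(\log R)^2$ eventually dominates $\log R$, there is a threshold $c_2 = c_2(n)$ (beyond which, for the fixed value of $c$) one has $\log C_n + n\log R \le (c\log R)^2/16$, and for such $R$ the exponent is at most $-(c\log R)^2/16$, as claimed.

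There is essentially no obstacle here beyond bookkeeping: the only points requiring slight care are the comparison of growth rates in the last step, which fixes the admissible range $R > c_2$, and the use of $c\log R > 2c_1$ to convert $(A-c_1)^2$ into a multiple of $A^2$. Everything else is a direct substitution into Lemma~\ref{l:Borell-TIS}.
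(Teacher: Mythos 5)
Your proof is correct and follows essentially the same route as the paper's: apply Lemma~\ref{l:Borell-TIS} with $\Omega=B_R$, $N\asymp R^n$ and $A=c\log R$, use $c\log R>2c_1$ to get $(A-c_1)^2/2>(c\log R)^2/8$, and absorb $\log N$ into half of the quadratic term for $R$ large. The only remark worth making is that your parenthetical ``for the fixed value of $c$'' is the honest reading of where the threshold $c_2$ comes from, which matches the paper's implicit assumption that $c$ is a fixed constant (e.g.\ $c>1$).
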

\begin{proof}
A disc or radius $R+1$ can be covered by $N=c_2 R^d$ discs of radius $1$, where $c_2$ is a constant which depends only on the dimension $d$. Taking this $N$ and $A=c\log R$ with $c>1$, the exponent on the right hand side of \eqref{eq: general probability bound} becomes
\[
\log(c_2R^d)-(c\log R-c_1)^2/2<-(c\log R)^2/16,
\]
provided that $c\log R >2c_1$ and $R$ is sufficiently large. 
\end{proof}
\begin{remark}
The factor $16$ in the statement of Corollary \ref{cor: norm of the field} is not optimal. Assuming that $c\log R$ is much larger than $c_1$ and taking $c_2$ large enough this constant could be reduced to any number greater than $2$. 
\end{remark}
\begin{remark}
This Corollary is particularly useful in the case of a stationary field, since in this case $\sigma_R$ does not depend on $R$.
\end{remark}
\begin{remark}
In exactly the same way, we can use $A=c\log^{1/2}R$. Provided that $c>2\sqrt{n}$ and $R$ is sufficiently large we get that
\[
\P\br{\|f\|_{C^k(B_R)}>c\sigma\log^{1/2} R}\le R^{n-c^2/4}.
\]
This means that the norm is of order $\sigma\sqrt{\log R}$, but on this scale we only have polynomial decay of tails. To have a super-polynomial decay we need $A$ to be of order $\log R$. In the same way, by taking $A$ to be of order $\sqrt{R}$ we get an exponential decay.
\end{remark}

Lemma \ref{l:Borell-TIS} and its Corollary \ref{cor: norm of the field} describe tails of the field in terms of the parameter $\sigma$. In the remaining part of the paper we will consider several couplings of Gaussian fields such that their difference is also a Gaussian field. The results above give us that in order to estimate how small is the difference in $C^k(B_R)$ norm we just have to compute the corresponding $\sigma^2_R$ for the difference of fields.


\section{Smooth case}
\label{s: smooth case}

Let us first consider the simplest case of two fields that could be thought of as a perturbation of the same field. This is a well known case and we present it here only in order to give a simple example which shows what kind of couplings and results we are aiming at.

 Let us assume that the fields $f_1$ and $f_2$ have covariance kernels $K_j=(1-\epsilon_j)K+\epsilon_j L_j$, where $K$ and $L_j$ are positive definite functions normalized $K(0)=L_j(0)=1$. In this case it is very natural to write 
\[
f_j(x)=\sqrt{1-\epsilon_j}f(x)+\sqrt{\epsilon_j} g_j(x),
\]
where $f$ and $g_i$ are independent fields with covariance kernels $K$ and $L_j$ correspondingly.
For this coupling we have that $f_1-f_2$ is a Gaussian field with covariance (and its derivatives) of order $\epsilon_1+\epsilon_2$. Assuming that both $\epsilon_j$ are of the same order $\epsilon$ we have that $\sigma^2_R$ from Corollary \ref{cor: norm of the field} is of order $\epsilon$.  

\begin{remark}
This is the first obvious step in the right direction, but it is clear that this is sub-optimal. Even in the case of coupling just two normal variables, there is a better coupling. If we have two central normal variables $X_1$ and $X_2$ with variances $\sigma_1^2$ and $\sigma_2^2$, then our coupling is morally equivalent to writing $X_2=X_1+X$, where $X\sim N(0,\sigma_2^2-\sigma_1^2)$ (assuming $\sigma_2>\sigma_1$). In this case the difference has variance $\sigma_2^2-\sigma_1^2$. If, as above, we denote by $\epsilon$ the difference between variances, we have that the difference is typically of order $\sqrt{\epsilon}$. On the other hand there is an $L^2$ optimal coupling $X_2=(\sigma_2/\sigma_1)X_1$. Alternatively, we can write $X_i=\sigma_i X$ where $X\sim N(0,1)$. In this case $X_2-X_1\sim N(0,(\sigma_1-\sigma_2)^2)$ which has variance of smaller order, the typical difference is of order $\epsilon$. Moreover, it is not hard to show that there is no coupling (even not jointly Gaussian) such that tails of $X_1-X_2$ decay faster.  
\end{remark}

Next, we consider the case when the spectral measures are sufficiently smooth and covariance kernels decay sufficiently fast. In particular, in this section we assume that the spectral measure has a density, that is there is a positive symmetric function $\rho$ such that $\d\rho(t)=\rho(t)\d t$. In this case there is a simple connection between the white noises in $L^2_H(\rho)$ and in $L^2_H(\d t)$. Namely,
\[
W_{L^2_H(\rho)}(f)=W_{L^2_H(\d t)}(f\sqrt{\rho}).
\] 
Alternatively, in the distributional sense, we can write 
\[
W_{L^2_H(\rho)}=\sqrt{\rho}W_{L^2_H(\d t)}.
\]
This means that the field can be written as
\[
f(x)=W_{L^2_H(\d t)}\br{\sqrt{\rho(t)}e^{2\pi i x\cdot t}}.
\]

Given two different fields with spectral densities $\rho_1$ and $\rho_2$, we can use the formula above to couple the two fields, namely, we use the same white noise to define both fields. 

\begin{remark}
The Fourier transform of $L^2_H(\d t)$ is the real $L^2(\R^n)=L^2(\R^n,\d x)$, so we can rewrite
\[
f(x)=\F W_{L^2_H(\rho)}=\F\br{ \sqrt{\rho}W_{L^2_H(\d t)}}=W*q=W(q(x-\cdot))
\]
where $q=\F(\sqrt{\rho})$ is the convolution square root of $K$, namely $K=q*q$ and $W$ is the real white noise in the standard $L^2(\R^n)$. This representation of stationary Gaussian fields is rather standard in statistics. In the context of the coupling of fields it was used in \cite{MV}. 
\end{remark}

\begin{remark}
This construction is very similar to the aforementioned optimal $L^2$ coupling of normal random variables. There is a similar optimal coupling of multivariate normal random  variables. In this coupling we write both random variables as linear transformations of the same standard normal. In our case, we write both fields as transformations of the same white noise.  
\end{remark}

With this coupling, the difference between fields becomes 
\[
F(x)=f_1(x)-f_2(x)=W_{L^2_H(\d t)}\br{\br{\sqrt{\rho_1(t)}-\sqrt{\rho_2(t)}}e^{2\pi i x\cdot t}}.
\]
This is a \emph{stationary} Gaussian field with spectral density $(\sqrt{\rho_2}-\sqrt{\rho_1})^2$. Its variance at any point is 
\[
\E F(x)^2=\E F(0)^2=\int \br{\sqrt{\rho_1(t)}-\sqrt{\rho_2(t)}}^2\d t=\|\sqrt{\rho_2}-\sqrt{\rho_1}\|_2^2.
\]
Similarly, for a multi-index $\alpha$ with $|\alpha|\le k+1$ we have
\[
\begin{aligned}
\E(\partial^\alpha F(0))^2=&(2\pi)^{2|\alpha|}\int t^{2\alpha}(\sqrt{\rho_2}-\sqrt{\rho_1})^2\d t \\
=&(2\pi)^{2|\alpha|}\|t^\alpha (\sqrt{\rho_2}-\sqrt{\rho_1})(t)\|_2^2.
\end{aligned}
\]
This argument proves the following theorem.
\begin{theorem}
\label{thm:spectral density}
Let $f_1$ and $f_2$ be two centered stationary Gaussian fields with spectral densities $\rho_1$ and $\rho_2$. We assume that both spectral measures have finite moments of order up to $2(k+1)$ for some integer $k\ge 0$. Then there is a coupling of these fields such that $F=f_2-f_1$ is a centered  stationary Gaussian field with 
\[
\sigma^2=\sup_{|\alpha|\le k+1}(2\pi)^{2|\alpha|}\|t^\alpha (\sqrt{\rho_2}-\sqrt{\rho_1})(t)\|_2^2,
\]
where $\sigma^2$ is defined by \eqref{eq: sigma_R} (it does not depend on $R$ in this case). 
\end{theorem}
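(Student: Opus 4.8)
The plan is to build the coupling directly from the white-noise representation of Section \ref{s: background}, exploiting the fact that both spectral measures are absolutely continuous. Since $\d\rho_j(t)=\rho_j(t)\,\d t$, each field admits the representation
\[
f_j(x)=W_{L^2_H(\d t)}\br{\sqrt{\rho_j(t)}\, e^{2\pi i x\cdot t}},
\]
where $W_{L^2_H(\d t)}$ is the white noise on $L^2_H(\d t)$. The key idea is to use the \emph{same} white noise $W=W_{L^2_H(\d t)}$ for both fields; this is the natural analogue of the optimal $L^2$ coupling of Gaussian vectors discussed above, and it is what makes the difference as small as possible.

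First I would check that the integrands lie in the relevant space: by the normalization and finiteness of the moments, $\sqrt{\rho_j}\, e^{2\pi i x\cdot t}\in L^2_H(\d t)$ for every $x$, so each $f_j$ is well defined and the joint law of $(f_1,f_2)$ is centered Gaussian. Then, by linearity of $W$, the difference is
\[
F(x)=f_2(x)-f_1(x)=W\br{\br{\sqrt{\rho_2(t)}-\sqrt{\rho_1(t)}}\, e^{2\pi i x\cdot t}},
\]
which is again of the form ``white noise integrated against $\sqrt{\rho}\, e^{2\pi i x\cdot t}$'' with $\rho=(\sqrt{\rho_2}-\sqrt{\rho_1})^2$. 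Hence $F$ is a \emph{stationary} centered Gaussian field with spectral density $(\sqrt{\rho_2}-\sqrt{\rho_1})^2$, and stationarity follows because the $x$-dependence enters only through the phase $e^{2\pi i x\cdot t}$.

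Next I would compute the variances of the derivatives. Using the isometry property $\E W(h_1)W(h_2)=\langle h_1,h_2\rangle_{L^2_H(\d t)}$ and differentiating the phase, which brings down a factor $(2\pi i t)^\alpha$, I would obtain
\[
\E\br{\partial^\alpha F(0)}^2=(2\pi)^{2|\alpha|}\int_{\R^n} t^{2\alpha}\br{\sqrt{\rho_2(t)}-\sqrt{\rho_1(t)}}^2\,\d t=(2\pi)^{2|\alpha|}\|t^\alpha(\sqrt{\rho_2}-\sqrt{\rho_1})(t)\|_2^2.
\]
Taking the supremum over $|\alpha|\le k+1$ yields the claimed $\sigma^2$, and since $F$ is stationary this quantity does not depend on $R$ in \eqref{eq: sigma_R}.

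The main obstacle is the \emph{justification of the differentiation under the white noise}, i.e. that $\partial^\alpha$ may be moved inside $W(\cdot)$ and that the resulting function $t^\alpha(\sqrt{\rho_2}-\sqrt{\rho_1})(t)\, e^{2\pi i x\cdot t}$ genuinely lies in $L^2_H(\d t)$. This is precisely where the hypothesis that both spectral measures have finite moments up to order $2(k+1)$ is used: it guarantees $t^\alpha\sqrt{\rho_j}\in L^2(\d t)$ for all $|\alpha|\le k+1$, so the differentiated integrand is square-integrable and the differential operator commutes with $W$, because the difference quotients converge in $L^2_H(\d t)$ and hence their images under the isometry $W$ converge in $L^2$ of the probability space. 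Everything else is a routine application of the white-noise isometry.
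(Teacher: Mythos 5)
Your proposal is correct and follows essentially the same route as the paper: represent both fields through a common white noise on $L^2_H(\d t)$ via $f_j(x)=W\br{\sqrt{\rho_j(t)}\,e^{2\pi i x\cdot t}}$, use linearity to identify $F$ as a stationary field with spectral density $(\sqrt{\rho_2}-\sqrt{\rho_1})^2$, and read off the derivative variances from the white-noise isometry. Your added justification of differentiating under $W(\cdot)$ via $L^2$ convergence of difference quotients is a point the paper leaves implicit, but it is the standard argument and does not constitute a different approach.
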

Corollary \ref{cor: norm of the field} gives us estimates on the tails of this coupling.

We formulate the result in terms of measures having density with respect to the Lebesgue measure since this is by far the most important case. We would like to point out that exactly the same argument works if $\d \rho_j(t)=\rho_j(t)\d\mu(t)$ where $\mu$ is a symmetric measure and $\rho_j$ are even functions from $L^1(\d\mu)$. In this case 
\[
W_{L^2_H(\d \rho_j)}=\sqrt{\rho_j}W_{L^2_H(\d \mu)}.
\] 
Using the same white noise in  $L^2_H(\d \mu)$ we can couple two fields in such a way that their difference is a stationary field with 
\[
\sigma^2=\sup_{|\alpha|\le k+1}(2\pi)^{2|\alpha|}\|t^\alpha (\sqrt{\rho_2}-\sqrt{\rho_1})(t)\|_{L^2_H(\d \mu)}^2.
\]
Since it does not really matter what is $\mu$, this method is applicable to any two spectral measures $\mu_1$ and $\mu_2$. We can always take $\d\mu=\d \mu_1+\d \mu_2$ and consider the corresponding $\rho_j=\d \mu_j/\d\mu$. But if measures $\mu_1$ and $\mu_2$ are too singular with respect to each other we do not have small $\sigma$. In the extreme case of singular measures we have that $\rho_j$ are characteristic functions of disjoint sets and the coupling is the same as the independent coupling.

\section{General case}
\label{s: general case}
The coupling constructed in the previous section is very natural and gives a very good estimate on the tightness of fields. Unfortunately, there are important examples that are not covered by this construction. There are two main reasons why it might be not applicable. First of all, it might be that there is no spectral density. In particular, this happens in the important case of the so called monochromatic waves (random plane wave and arithmetic waves), for these models the support of the spectral measure is either one-dimensional or a finite set of points. The corresponding fields are not in $L^2$ and can't be written as a convolution with the white noise.

There is a standard way to deal with this problem: one can convolve the spectral measure with a mollifier, equivalently, multiply the covariance function by a function which is fast decaying and close to $1$ on a large compact. Unfortunately, this does not immediately leads to a tight coupling (but the results of this section will give an explicit construction). Moreover, it still does not cover the following case. Let us assume that spectral measures are orthogonal: in this case, even if there are spectral densities, the coupling above is the trivial independent coupling, which is not tight. One might argue, that in the case of orthogonal spectral measures one should not expect close couplings, but there are situations where spectral measures are still close in some sense and there should be some kind of close coupling. 

To explain the idea, let us consider the simplest example which is similar to coupling from \cite{BeMa}. Let $\rho_1=\rho_{x_1}=(\delta_{x_1}+\delta_{-x_1})/2$ be the sum of two symmetric delta measures. In this case 
the field is 
\[
f_1(x)=b\cos(2\pi x\cdot x_1)-c\sin(2\pi x\cdot x_1),
\]
where $b$ and $c$ are independent real $N(0,1)$ random variables. Its covariance is 
\[
\E f(x)f(y)=\cos(2\pi (x-y)\cdot x_1).
\]

Now consider another field like this with spectral measure $\rho_2=\rho_{x_2}$. It is clear that $\rho_2\to\rho_1$ in weak-* topology as $x_2\to x_1$. In particular, $f_2$ converges to $f_1$ in distribution. On the other hand, it is easy to see that if $(x-y)\cdot (x_1-x_2)$ is not small, the difference between covariance kernels is not small, and it is impossible to have a coupling which has small variance everywhere. Kernels are close inside a disc of radius $R$ as long as $|x_1-x_2|=o(1/R)$. 

The most natural coupling in this case is to write
\[
f_2(x)=b\cos(2\pi x\cdot x_2)-c\sin(2\pi x\cdot x_2),
\]  
where $b$ and $c$ are the same as in the definition of $f_1$. With this coupling, the difference is small in a disc of radius $R=o(|x_2-x_1|^{-1})$ as long as the coefficients are not too large. The last event has Gaussian tails.  

If we have a converging sequence of spectral measures of this type, then we can couple all of them in such a way that they are getting closer and closer in larger and larger domains.

The goal of this section is to generalise this construction to the case of general measures.

As explained above, a field is a functional of the white noise, hence a coupling of  fields can be given by a coupling of white noises. In this section we assume that all fields have variance $1$, equivalently, spectral measures have total mass $1$ (otherwise we just rescale the fields). In this case, we can couple white noises by coupling spectral measures.

Let $\rho_1$ and $\rho_2$ be two spectral measures on $\R^n$ and $\rho$ be their symmetric coupling on $\R^n\times \R^n$, i.e. a symmetric measure such that its marginals are $\rho_1$ and $\rho_2$. Let $W$ be the white noise in $L^2_H(\R^n\times\R^n, \rho)$. For $f\in L^2_H(\R^n,\rho_1)$ we can define $W_1(f)=W(f)$ where we extend $f$ from $\R^n$ to $\R^{2n}$ by $f(s,t)=f(s)$. Essentially, $W_1$ is the projection of $W$ onto the first coordinate. It is easy to see that in this case
\[
\begin{aligned}
\E(W_1(f)W_1(g))=&\E(W(f)W(g))=\int_{\R^n\times\R^n} f(s)\bar{g}(s)d\rho(s,t)
\\
=&\rho_2(\R^n)\int_{\R^n} f(s)\bar{g}(s)d \rho_1(s). 
\end{aligned}
\]
Since $\rho_2$ is a probability measure we see that $W_1$ is the white noise in $L^2(\R^n,\rho_1)$. In the same way we can define $W_2$. This constructions shows that any coupling of measures gives rise to a coupling of white noises and fields.  To be more precise, we have
\[
\begin{aligned}
f_1(x)=&W_1(e^{2\pi i x\cdot s}) =W(e^{2\pi i x\cdot s})= W(e^{2\pi i (x,0)\cdot(s,t)}),
\\
f_2(x)=&W_2(e^{2\pi i x\cdot t})= W(e^{2\pi i x\cdot t})=W(e^{2\pi i (0,x)\cdot(s,t)}).
\end{aligned}
\]
Their difference is 
\[
F(x)=W(e^{2\pi i x\cdot s}-e^{2\pi i x\cdot t}).
\]
This is a Gaussian field with covariance
\[
\E F(x)F(y)=\int_{\R^{n}\times\R^n} (e^{2\pi i x\cdot s}-e^{2\pi i x\cdot t})(e^{-2\pi i y\cdot s}-e^{-2\pi i y\cdot t})\d \rho(s,t).
\]
First of all, we note that, as expected, this field is not stationary. In particular,
\[
\Var F(x)=\int_{\R^{n}\times\R^n}\left|e^{2\pi i x\cdot s}-e^{2\pi i x\cdot t}\right|^2 \d\rho(s,t)
\]
does depend on $x$ (note that $\Var F(0)=0$).

In the same way, by differentiating the covariance kernel, for any multi-index $\alpha$ with $|\alpha|\le k+1$ we have
\begin{equation}
\label{eq:var d F}
\Var \partial_x^\alpha F=
 (2\pi)^{2|\alpha|}\int_{\R^{n}\times\R^n}\left|t^\alpha e^{2\pi i x\cdot s}-s^\alpha e^{2\pi i x\cdot t}\right|^2 \d\rho(s,t).
\end{equation}

According to Lemma \ref{l:Borell-TIS}, the norm of $F$ in $\Omega$ is controlled by the supremum of these variances over all $x\in \Omega$. It is not hard to write a uniform upper bound on the integrand: 
\[
\Var \partial_x^\alpha F\le C(|x|^2+1)\int (|s|^2+|t|^2+1)^{k+1}|s-t|^2\d \rho(s,t)
\]
where $C$ is a constant which depends on the dimension and $k$, but not on $\rho$ or $x$.  By optimizing the coupling we have the following theorem.
\begin{theorem}
\label{thm: general coupling}
Let $f_1$ and $f_2$ be two stationary Gaussian fields with spectral measures $\rho_1$ and $\rho_2$. We assume that these measures have finite moments of order up to $2(k+1)$ for some integer $k$. In this case there is a coupling such that $F=f_2-f_1$ is a non-stationary Gaussian field such that 
\begin{equation}
\label{eq:sigma for F}
\begin{aligned}
\sigma^2_R&=\sup_{x\in B_{R+1}}\sup_{|\alpha|\le k+1} \Var \partial_x^\alpha F(x)
\\
&
\le
 C(R^2+1)\inf_{\rho\in \mathcal{P}(\rho_1,\rho_2)}\int (|s|^2+|t|^2+1)^{k+1}|s-t|^2\d \rho(s,t),
\end{aligned}
\end{equation}
where $\mathcal{P}(\rho_1,\rho_2)$ is the space of all symmetric couplings of $\rho_1$ and $\rho_2$ and $C$ is a constant which depends only on $n$ and $k$.
\end{theorem}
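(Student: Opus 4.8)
The coupling itself is already constructed: for any symmetric coupling $\rho\in\mathcal{P}(\rho_1,\rho_2)$ the white noise $W$ on $L^2_H(\R^n\times\R^n,\rho)$ realises $f_1$ and $f_2$ as its two coordinate projections, and $F=f_2-f_1$ has the derivative variances recorded in \eqref{eq:var d F}. The plan is therefore threefold: (i) prove a pointwise upper bound on the integrand of \eqref{eq:var d F} of the shape $C(n,k)(|x|^2+1)(|s|^2+|t|^2+1)^{k+1}|s-t|^2$, uniformly over all multi-indices $|\alpha|\le k+1$; (ii) integrate against $\rho$ and pass to the supremum defining $\sigma_R^2$ in \eqref{eq:sigma for F}; (iii) since the construction works verbatim for every admissible $\rho$, minimise over $\rho\in\mathcal{P}(\rho_1,\rho_2)$.

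The heart of the matter is (i). The integrand of \eqref{eq:var d F} equals $(2\pi)^{2|\alpha|}$ times the squared modulus of a difference of two terms that coincide on the diagonal $\{s=t\}$; writing $G(u)=u^\alpha e^{2\pi i x\cdot u}$ it has the form $(2\pi)^{2|\alpha|}|G(s)-G(t)|^2$. This vanishing on the diagonal is exactly what should produce the tightness factor $|s-t|$, and I would extract it by the fundamental theorem of calculus along the segment from $t$ to $s$:
\[
G(s)-G(t)=\int_0^1\nabla G\big(t+\theta(s-t)\big)\cdot(s-t)\,\d\theta,
\]
so that $|G(s)-G(t)|\le|s-t|\,\sup_{u\in[t,s]}|\nabla G(u)|$.

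It then remains to bound $|\nabla G|$ uniformly. Since $\partial_{u_j}G(u)=\big(\alpha_j u^{\alpha-e_j}+2\pi i x_j u^\alpha\big)e^{2\pi i x\cdot u}$ and $|u^\beta|\le|u|^{|\beta|}$, one obtains $|\nabla G(u)|^2\le C(n,k)(1+|x|^2)(1+|u|^2)^{k+1}$ using $|\alpha|\le k+1$. On the segment $|u|\le|s|+|t|$, hence $(1+|u|^2)^{k+1}\le 2^{k+1}(1+|s|^2+|t|^2)^{k+1}$; absorbing $(2\pi)^{2|\alpha|}\le(2\pi)^{2(k+1)}$ into the constant gives the integrand bound of (i), with constant and weight independent of $\alpha$. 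Integrating against $\rho$, bounding $1+|x|^2\le C(R^2+1)$ for $x\in B_{R+1}$, and taking the supremum over $|\alpha|\le k+1$ yields $\sigma_R^2\le C(R^2+1)\int(|s|^2+|t|^2+1)^{k+1}|s-t|^2\,\d\rho$ for the field attached to $\rho$; minimising over $\rho$ gives the theorem. Finiteness of the bounding integral is ensured by the moment hypothesis on the marginals (and is automatic in the compactly supported applications we have in mind).

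The only genuine obstacle is the uniformity in (i): one must exhibit a single constant $C(n,k)$ and a single polynomial weight $(|s|^2+|t|^2+1)^{k+1}$ valid for all $|\alpha|\le k+1$, while retaining both the diagonal vanishing -- the factor $|s-t|^2$, which is what makes the coupling tight and must not be discarded -- and the dependence on $|x|$. Steps (ii) and (iii) are routine: passing to the supremum over $B_{R+1}$ merely turns $|x|$ into $R$, and since the claim is only the inequality $\sigma_R^2\le C(R^2+1)\inf_\rho\int\cdots$, I would not attempt to show the infimum is attained -- bounding $\sigma_R^2$ for an arbitrary $\rho$ and then taking the infimum suffices.
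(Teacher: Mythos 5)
Your proposal is correct and follows essentially the same route as the paper: the paper constructs the identical white-noise coupling from a symmetric coupling $\rho$, asserts without detail the pointwise bound $C(|x|^2+1)(|s|^2+|t|^2+1)^{k+1}|s-t|^2$ on the integrand of \eqref{eq:var d F}, and then ``optimizes the coupling''. Your fundamental-theorem-of-calculus argument along the segment from $t$ to $s$ supplies exactly the uniformity details the paper omits, and steps (ii)--(iii) match the paper's conclusion.
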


\begin{remark} The infimum in \eqref{eq:sigma for F} is the Wasserstain distance between measures $\rho_1$ and $\rho_2$ with the cost function $(|s|^2+|t|^2+1)^{k+1}|s-t|^2$. Note that this is very similar to the square of the Wasserstein-$2$ distance between moments of $\rho_1$ and $\rho_2$. 
\end{remark}

\section{Applications}
\label{s: application}
\subsection{Arithmetic plane waves}
\label{ss: arithmetic wave}
There are many scenarios where one would like to show that a sequence of fields converge to a limiting field not only in distribution, but also uniformly (with large probability). In this section we would like to discuss a particular case of arithmetic plane waves. 

Let us fix the dimension $n\ge 2$ and let $\S^{n-1}$ be the unit ball in $\R^{n}$. The \emph{arithmetic random  wave} is a Gaussian field in $\R^n$ such that its spectral measure is 
\[
\rho_m=\frac{1}{r_n(m)}\sum_{\lambda\in \Lambda_m}\delta_\lambda,
\]
where $\Lambda_m=\{\lambda\in \Z^n/\sqrt{m}: |\lambda|=1\}$ and $r(m)=r_n(m)=|\Lambda_m|$ is the cardinality of $\Lambda_m$ (equal to the number of ways to write $m$ as a sum of $n$ squares). 
This field is a standard Gaussian function $f_m$ in the space of eigenfunctions of the Laplacian with eigenvalue $4\pi^2$ in the flat torus of size $\sqrt{m}$.  We are interested in the behaviour of this field as $m$ tends to infinity. 

\subsubsection{Dimension $n=2$}
When the dimension $n=2$ the field is not  quite well defined, since some values $m$ can not be represented as the sum of two squares. For these values $\Lambda_m=\emptyset$ and the field is not defined. In fact, this happens for most of $m$. Let $S_2$ be the set of $m$ which are sums of two squares. Landau \cite{landau} proved in 1908 that the set $S_2$ has zero density, to be more precise $|S_2\cap [0,n]|\approx c n/\sqrt{\log n}$. In this paper we always implicitly assume that $m\in S_2$ and for various subsets of $S_2$ we will discuss their densities relative to $S_2$. Namely, we say that a sequence $m_k$ is \emph{generic} or of \emph{density $1$} if $|\{m_k\le n\}|/|\{m\in S_2, m\le n\}|\to 1$ as $n\to \infty$. But even within $S_2$ the behaviour of $r(m)$ and of $\rho_m$ is highly irregular. For example, $r(2^n)=4$ for every $n$, but $r(m)$ is unbounded. It is known that $r(m)=o(m^\epsilon)$ for every $\epsilon>0$. On the other hand, there is $c>0$ such that $\log \log m \le c \log r(m)$ for a density $1$ subsequence. It is also known, that for generic $m$ the measure $\rho_m$ converges weakly to the normalized arc-length, but there are (zero density) sequences such that $\rho_m$ converges to other measures. See  \cite{equidi,erdhal,krkuwi,kurwig} for details.

In the case when $\rho_{m_k}$ converges to the uniform distribution, the field $f_{m_k}$ converges to the random plane wave in distribution. If we can quantify how fast the measures converge in the sense of \eqref{eq:sigma for F}, then Theorem \ref{thm: general coupling} and Corollary \ref{cor: norm of the field} give us the coupling such that the fields converge in $C^k$ norm in a growing ball.

\subsubsection{Dimensions $n=3$ and $n\ge 4$}
In the case of $n=3$ more integers belong to the corresponding set $S_3$ and the distribution of integer points on the sphere is more uniform. To be more precise, $S_3$ contains all integers that are not of the form $4^a(8b+7)$. Unlike the dimension $n=2$, this set has positive density equal to $5/6$ \cite{Wagstaff}. Multiplying $m$ by a power of $4$ does not change the number of points, hence it is natural to redefine $S_3$ and to consider only $m$ that are not equal to $0$, $4$ or $7$ mod $8$. For every $\epsilon>0$ and for $m\in S_3$ we have that $m^{1/2-\epsilon}=o(r_3(m))$ and $r_3(m)=o(m^{1/2+\epsilon})$. The set $\Lambda_m$ is uniformly distributed in $\S^2$. The precise statement will be given later, but we have that $\rho_m$ converges to the uniform measure and the field converges in the law to the random plane wave which is the field with the spectral measure which is the uniform measure on $\S^2$. 

In higher dimension the distribution becomes even more regular. First of all every number can be written as a sum of at least four integers. In dimension $n=4$ there are some exceptional values with small $r_4(m)$. In particular, $r_4(2^a)=24$. If $n\ge 5$ (or $n=4$ but we exclude $m$ with small $r(m)$), then $r_n(m)$ is of order $m^{(n-2)/2}$. Moreover, the points of $\Lambda_m$ are asymptotically uniformly distributed. 


Our goal is to show that arithmetic waves converge to the random plane wave not only is law, but we can couple them so that the difference converges locally uniformly. Moreover, we want to quantify the rate of convergence.

First, we note that the measures are singular with respect to each other, this means that strong results of Section \ref{s: smooth case} are not applicable, so we are going to use Theorem \ref{thm: general coupling}.  Next, since $\rho_m$ and the surface measure are supported on $\S^n$, for $R>1$ we can estimate $\sigma_R^2$ from \eqref{eq:sigma for F} by
\begin{equation}
\label{eq: arithmetic sigma}
\sigma_R^2\le CR^2W_2^2(\rho_m,\sigma),
\end{equation}
where $\sigma$ is normalized surface area and 
\[
W_2^2(\rho_m,\sigma)=\inf_{\rho\in \mathcal{P}(\rho_m,\sigma)}\int |t-s|^2\d \rho(t,s)
\]
is the Wasserstein (transport) distance between $\rho_m$ and $\sigma$, and $C$ is a constant that might depend on the dimension $n$ and degree $k$, but not on $m$.

In order to estimate the Wasserstein distance we are going to use a very simple observation based on the transport interpretation of the distance.
\begin{proposition}
\label{prop: W_2 and discrepancy}
Let $\Omega_j$ be a partition of $\S^n$ such that the diameter of each $\Omega_j$ is at most $r$. For two probability measures $\rho_1$ and $\rho_2$ on $\S^n$ we define 
\begin{equation}
\label{eq: discrepancy def}
\Delta(\Omega)=\Delta_{\rho_1,\rho_2}(\Omega)=|\rho_1(\Omega)-\rho_2(\Omega)|
\end{equation}
to be the discrepancy of these measures on $\Omega$. Then
\[
W_2^2(\rho_1,\rho_2)\le C (r^2+\sum_j\Delta(\Omega_j))
\]
where $C$ is an absolute constant.
\end{proposition}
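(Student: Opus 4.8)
The plan is to bound $W_2^2$ by constructing an explicit transport plan $\pi\in\mathcal{P}(\rho_1,\rho_2)$ and estimating $\int|s-t|^2\,\d\pi(s,t)$. The idea is to split the transported mass into a \emph{local} part, matched within individual cells $\Omega_j$ (cheap, since any two points in a cell are within distance $r$), and an \emph{excess} part, whose total mass is controlled precisely by the discrepancies and which may have to travel across the whole sphere.

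Write $a_j=\rho_1(\Omega_j)$, $b_j=\rho_2(\Omega_j)$, $m_j=\min(a_j,b_j)$ and $M=\sum_j m_j$. For the local part, for each cell with $a_j,b_j>0$ set $\mu_j^1=(m_j/a_j)\,\rho_1|_{\Omega_j}$ and $\mu_j^2=(m_j/b_j)\,\rho_2|_{\Omega_j}$, which are submeasures of $\rho_1$ and $\rho_2$, each of mass $m_j$ and supported in $\Omega_j$. Coupling them by the normalised product $\pi_j=m_j^{-1}\mu_j^1\otimes\mu_j^2$ yields a plan supported on $\Omega_j\times\Omega_j$, where $|s-t|\le\operatorname{diam}\Omega_j\le r$; hence its contribution to the cost is at most $r^2 m_j$, summing to at most $r^2 M$.

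For the excess part, the leftover measures $\nu^1=\rho_1-\sum_j\mu_j^1$ and $\nu^2=\rho_2-\sum_j\mu_j^2$ are nonnegative and both of mass $1-M$; I would couple them by the normalised product $\pi_{\mathrm{ex}}=(1-M)^{-1}\nu^1\otimes\nu^2$. Since $\S^n$ has Euclidean diameter at most an absolute constant $D$, this contributes at most $D^2(1-M)$. The full plan $\pi=\sum_j\pi_j+\pi_{\mathrm{ex}}$ has marginals $\rho_1$ and $\rho_2$ by construction, so $W_2^2\le r^2 M+D^2(1-M)$. The final ingredient is the identity $\sum_j|a_j-b_j|=2(1-M)$, which follows from $\sum_j a_j=\sum_j b_j=1$ together with $a_j+b_j-|a_j-b_j|=2m_j$. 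Thus $1-M=\tfrac12\sum_j\Delta(\Omega_j)$, and substituting gives the claim with $C=\max(1,D^2/2)$.

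I do not expect a serious obstacle: the argument is elementary transport bookkeeping. The only care needed is in the degenerate cases — skipping cells with $m_j=0$ when forming the products, and treating $M=1$ (no excess mass) separately — and in checking that the marginals of $\pi$ are exactly $\rho_1$ and $\rho_2$. The conceptual crux is recognising that the mass which cannot be matched cell by cell is exactly $\tfrac12\sum_j\Delta(\Omega_j)$, so that the expensive long-range transport is paid for precisely by the discrepancy term.
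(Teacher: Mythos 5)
Your construction is exactly the paper's argument, carried out with explicit bookkeeping: match the mass $\min(\rho_1(\Omega_j),\rho_2(\Omega_j))$ within each cell at cost at most $r^2$ per unit mass, and couple the leftover mass (which you correctly identify as $\tfrac12\sum_j\Delta(\Omega_j)$) arbitrarily, paying only the bounded diameter of the sphere. The proof is correct and matches the paper's approach.
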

\begin{proof}
Within each $\Omega_j$ we can transport (couple) smaller measure to a part of the larger one. Since in this coupling we do not move points by more than $r$, the total transport cost is at most $r^2\min(\rho_1(\Omega_j),\rho_2(\Omega_j))$. Summing over $j$ we have the upper bound $r^2$. We don't have control over the transport of the remaining part of the measures, but since $\S^n$ is bounded we have an upper bound of $C \sum \Delta(\Omega_j)$. Summing two estimates we prove the proposition.
\end{proof}


Study of the distribution of integer points on spheres is a classical subject and the estimates of the discrepancy between $\rho_m$ and $\sigma$ are already available. They are of slightly different form in small and large dimensions. 

\begin{proposition}[Dimension $n=2$: Erd\"os-Hall \cite{erdhal}]
\label{erdhalprop}
For every $\epsilon>0$ there is a density one subsequence of $S_2$ such that 
\begin{equation*}
\Delta_{\rho_m,\sigma}(I)<{\log(m)^{-\kappa+\epsilon}}
\end{equation*}
where $\kappa=\log(\pi/2)/2$ and $I$ is any arc in $\S^1$.
\end{proposition}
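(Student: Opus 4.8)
The plan is to reduce the bound, which must hold uniformly over all arcs $I$, to a statement about angular Weyl sums, to use the factorisation of $m$ in the Gaussian integers $\Z[i]$ to put these sums in explicit multiplicative form, and finally to control them on a density one set of $m$ by a conditional first moment (Markov) argument. The specific exponent $\kappa=\tfrac12\log(\pi/2)$ will turn out to encode two facts: that $|\cos|$ has mean $2/\pi$, and that a typical $m\in S_2$ has about $\tfrac12\log\log m$ prime factors congruent to $1\bmod 4$.

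First I would apply the Erd\"os--Tur\'an inequality, which bounds the discrepancy uniformly over arcs by the angular Fourier coefficients. Writing $\theta_\lambda$ for the argument of $\lambda\in\Lambda_m$ and
\[
c_h(m)=\frac{1}{r(m)}\left|\sum_{\lambda\in\Lambda_m}e^{ih\theta_\lambda}\right|,
\]
one has $\sup_I\Delta_{\rho_m,\sigma}(I)\ll \tfrac{1}{H}+\sum_{0<h\le H}\tfrac1h\,c_h(m)$ for any cutoff $H$. Because $\Lambda_m$ is invariant under the order-four symmetry group of the square lattice, $c_h(m)$ vanishes unless $4\mid h$; this simplifies the bookkeeping but is not essential. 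The key computation is then purely algebraic: each $\lambda$ corresponds to a Gaussian integer of norm $m$, and if $m=2^{a_0}\prod_j p_j^{a_j}\prod_\ell q_\ell^{2b_\ell}$ with $p_j\equiv1$ and $q_\ell\equiv3\bmod4$, the integers of norm $m$ are unit multiples of $\prod_j\pi_j^{c_j}\bar\pi_j^{a_j-c_j}$ (with $p_j=\pi_j\bar\pi_j$, $0\le c_j\le a_j$), while the real and $q_\ell$ factors affect only $|z|$. Collecting angles, the Weyl sum factorises into Dirichlet kernels, and after dividing by $r(m)=4\prod_j(a_j+1)$ one gets
\[
c_h(m)=\prod_j\left|\frac{\sin((a_j+1)\gamma_{j,h})}{(a_j+1)\sin\gamma_{j,h}}\right|,\qquad \gamma_{j,h}=4h\,\arg\pi_j,
\]
each factor lying in $[0,1]$; for the overwhelmingly common exponent $a_j=1$ the factor is simply $|\cos\gamma_{j,h}|$.

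I would then estimate $c_h(m)$ on average over a restricted but still density one family. By Hardy--Ramanujan applied to primes $\equiv1\bmod4$, a density one set of $m\in S_2$ has $\omega_1(m)\sim\tfrac12\log\log m$ distinct such prime factors, essentially all to the first power (membership in $S_2$ constrains only the $q_\ell$, so it does not disturb this count). Conditioning on $\omega_1(m)=k\approx\tfrac12\log\log m$, the factors in $c_h(m)$ are $|\cos\gamma_{j,h}|$ with the $\gamma_{j,h}$ equidistributed by Hecke's theorem, so that
\[
\E\!\left[c_h(m)\,\middle|\,\omega_1(m)=k\right]\approx\left(\frac{1}{2\pi}\int_0^{2\pi}|\cos\gamma|\,\d\gamma\right)^{\!k}=\left(\frac{2}{\pi}\right)^{\!k}=(\log m)^{-\kappa+o(1)}.
\]
Here $\kappa=\tfrac12\log(\pi/2)$ appears exactly through the mean $2/\pi$ of $|\cos|$ and the typical count $k\sim\tfrac12\log\log m$; it is essential to fix $\omega_1$ at its typical value, since the unconditional mean is dominated by atypical $m$ and yields a different (weaker) exponent. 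Summing over $h\le H$ against $1/h$, choosing $H=(\log m)^{\kappa}$ to balance the Erd\"os--Tur\'an tail $1/H$, and applying Markov's inequality to $\sum_h h^{-1}c_h(m)$ over this family then gives $\Delta_{\rho_m,\sigma}(I)<(\log m)^{-\kappa+\epsilon}$ for all arcs $I$ along a density one subsequence, with the $\epsilon$ absorbing the $\log H$ and the $o(1)$.

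The step I expect to be the main obstacle is making the conditional mean estimate uniform in the harmonic: I need the bound $\ll(\log m)^{-\kappa+\epsilon}$ not for one $h$ but for all $h\le H$ at once, which forces the equidistribution of the prime angles $\{\arg\pi_p\}$ to be effective in $h$ (equivalently, a quantitative handle on the relevant Hecke $L$-functions and their zero-free regions). I would also have to check that the higher prime powers $a_j\ge2$ and the even-exponent constraint on the $q_\ell\equiv3\bmod4$ contribute only lower-order corrections, and that the restriction to normal $\omega_1(m)$ can be made rigorous by a Tur\'an--Kubilius second-moment bound. Since the statement is classical, in the paper one simply cites \cite{erdhal} for these analytic inputs.
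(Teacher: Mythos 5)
The paper does not actually prove this proposition: it is quoted, citation and all, from Erd\"os--Hall \cite{erdhal}, so there is no internal argument to compare yours against. That said, your sketch is a faithful reconstruction of the mechanism behind the cited theorem. The Erd\"os--Tur\'an inequality does reduce the arc-uniform discrepancy to the angular Weyl sums; the factorisation of Gaussian integers of norm $m$ does turn the normalised Weyl sum into the product of Dirichlet-kernel factors you write down (with $|\cos\gamma_{j,h}|$ for the dominant case $a_j=1$); and the exponent $\kappa=\tfrac12\log(\pi/2)$ comes out exactly as you derive it, from $(2/\pi)^{\omega_1(m)}$ with $\omega_1(m)\sim\tfrac12\log\log m$ on a density one subset of $S_2$. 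Your observation that the unconditional first moment of $c_h$ over $S_2$ would only yield the weaker exponent $\tfrac12(1-2/\pi)\approx 0.182$, so that one must work at the normal order of $\omega_1$, is the genuinely nontrivial structural point of the Erd\"os--Hall argument, and you have identified it correctly. What you have is still an outline rather than a proof: the steps you flag yourself --- effective Hecke equidistribution of the prime angles uniformly in the harmonic $h\le H$, a rigorous mean-value (or tilted-moment) statement for the multiplicative quantity $c_h(m)$ restricted to $m$ with $\omega_1(m)$ near its normal order, and the Tur\'an--Kubilius control of the exceptional set --- are precisely the analytic content of \cite{erdhal} and are not routine. Since the present paper uses the proposition purely as an imported black box feeding into Proposition \ref{prop: W_2 and discrepancy} and Theorem \ref{thm:arithmetic coupling}, citing \cite{erdhal} is the appropriate ``proof'' here; your sketch is best read as a correct account of why the cited bound has the shape and exponent it does, not as a self-contained replacement for the reference.
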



\begin{proposition}[Dimension $n=3$: Duke and Schulze-Pillot \cite{DSP}]
\label{prop: DSP}
Let $n=3$, $m\in S_3$ and $\Omega$ be a convex domains on $\S^{2}$ with piecewise smooth boundary.   Then, for every $\epsilon>0$
\[
\Delta_{\rho_m,\sigma}(\Omega)=O(m^{-\frac{1}{175}+\epsilon}).
\]
\end{proposition}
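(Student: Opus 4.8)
The plan is to reduce the discrepancy to a short sum of Weyl sums over spherical harmonics and then to estimate each Weyl sum using the arithmetic of ternary quadratic forms, i.e. the theory of Fourier coefficients of half-integral weight modular forms. The first step is a spherical Erd\H{o}s--Tur\'an (Koksma--Sz\"usz) inequality. Expanding a suitably smoothed indicator of $\Omega$ in real spherical harmonics $Y_{\ell,j}$, $0\le j\le 2\ell$, one obtains a bound of the shape
\[
\Delta_{\rho_m,\sigma}(\Omega)\ll \frac1L+\sum_{\ell=1}^{L}\frac1\ell\sum_{j}\left|\frac{1}{r_3(m)}\sum_{\lambda\in\Lambda_m}Y_{\ell,j}(\lambda)\right|
\]
for a cutoff $L$ to be chosen later. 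The term $1/L$ is the truncation and boundary error, and here the hypothesis that $\Omega$ is convex with piecewise smooth boundary is used to guarantee that the harmonic coefficients of $\id_\Omega$ decay fast enough for this error to be uniform in $\Omega$. All the arithmetic is now carried by the Weyl sums $\sum_{\lambda\in\Lambda_m}Y_{\ell,j}(\lambda)$.

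Second, I would recognise each Weyl sum as a normalised Fourier coefficient of a theta series with harmonic weight. If $P=P_{\ell,j}$ denotes the homogeneous harmonic polynomial of degree $\ell$ restricting to $Y_{\ell,j}$ on $\S^2$, then $\theta_P(z)=\sum_{v\in\Z^3}P(v)e^{2\pi i|v|^2 z}$ is a modular form of weight $\ell+3/2$, and it is a \emph{cusp} form precisely because $\ell\ge1$ (the theta series attached to a harmonic polynomial of positive degree has no constant term in any cusp). Since $\lambda=v/\sqrt m$ and $P(v)=m^{\ell/2}Y_{\ell,j}(\lambda)$ on the sphere $|v|^2=m$, the $m$-th Fourier coefficient of $\theta_P$ equals $m^{\ell/2}\sum_{\lambda\in\Lambda_m}Y_{\ell,j}(\lambda)$, which is exactly the (unnormalised) Weyl sum up to the explicit factor $m^{\ell/2}$.

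Third, and this is the analytic heart of the argument, I would invoke the nontrivial bound of Iwaniec and Duke for Fourier coefficients of half-integral weight cusp forms, which beats the Hecke bound by a fixed power of $m$. Dividing by the lower bound $r_3(m)\gg_\epsilon m^{1/2-\epsilon}$, valid for $m\in S_3$ via Siegel's estimate for the relevant class number, this yields a normalised Weyl sum of size $O_{\ell,\epsilon}(m^{-\delta+\epsilon})$ for some absolute $\delta>0$. Tracking the dependence on $\ell$ through the expansion and optimising the cutoff $L=L(m)$ to balance the truncation error $1/L$ against the accumulated Weyl-sum contribution then produces the stated exponent $1/175$.

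The hard part is the third step: the subconvex bound on half-integral weight Fourier coefficients. The reduction to Weyl sums, the modularity and cuspidality of $\theta_P$, and the lower bound on $r_3(m)$ are all essentially standard; it is solely the power saving $\delta$ over the Hecke bound --- obtained via Kuznetsov-type trace formulas together with estimates for the associated shifted convolution sums --- that upgrades mere weak-$*$ equidistribution of $\rho_m$ into the quantitative rate $m^{-1/175+\epsilon}$, and the numerical value of the exponent is entirely a consequence of how strong that input is.
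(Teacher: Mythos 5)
The paper offers no proof of this proposition: it is imported verbatim from Duke and Schulze-Pillot \cite{DSP} and used purely as a black-box input to Theorem \ref{thm:arithmetic coupling}, so there is no internal argument to measure yours against. That said, your outline is a faithful account of how the cited result is actually established in the literature: a spherical Erd\H{o}s--Tur\'an inequality reducing the discrepancy to Weyl sums over spherical harmonics, the identification of those Weyl sums with Fourier coefficients of the cuspidal theta series $\theta_P$ of weight $\ell+3/2$ (cuspidal precisely because $\ell\ge 1$), the Iwaniec--Duke subconvex bound for half-integral weight Fourier coefficients as the analytic engine, and Siegel's lower bound $r_3(m)\gg_\epsilon m^{1/2-\epsilon}$ for $m\in S_3$. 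Two caveats are worth recording. First, since Siegel's theorem enters, the implied constant in $O(m^{-1/175+\epsilon})$ is ineffective, and Theorem \ref{thm:arithmetic coupling} inherits that ineffectivity. Second, your sketch cannot by itself certify the numerical exponent $1/175$: that value is the output of a specific bookkeeping of the subconvexity saving against the truncation parameter $L$ and of the uniformity in $\ell$ and in $\Omega$ of the harmonic expansion of $\id_\Omega$ (this is where convexity and piecewise smoothness of the boundary are consumed), none of which you carry out. So as a self-contained proof the proposal has a gap exactly where the paper itself defers to \cite{DSP}; as an explanation of where the proposition comes from and of which step is genuinely deep, it is accurate.
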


\begin{proposition}[{Dimension $n\ge 4$: Malyshev  \cite[Main Theorem]{Malyshev} and Podsypanin \cite{Podsypanin}}]
\label{prop: Malyshev}
Let $n\ge 4$ and $\Omega_m$ be convex domains on $\S^{n-1}$ with piecewise smooth boundary.   Then, for every $\epsilon>0$
\[
\Delta_{\rho_m,\sigma}(\Omega_m)=O(m^{-\frac{n-3}{4(3n-2)}+\epsilon}),
\]
where the implicit constant on $O$ is independent of $\Omega$ and $m\in S_n$.
\end{proposition}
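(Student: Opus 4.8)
This Proposition is a power-saving equidistribution statement for the lattice points $\Lambda_m$, so the plan is to reduce it to an asymptotic count of the points of $\Lambda_m$ landing in $\Omega_m$ and then to read off the rate from the Hardy--Littlewood circle method. Writing
\[
N(m,\Omega)=\#\{v\in\Z^n:\ |v|^2=m,\ v/\sqrt m\in\Omega\},
\]
the normalization of $\rho_m$ gives $\rho_m(\Omega)=N(m,\Omega)/r_n(m)$, so that
\[
\Delta_{\rho_m,\sigma}(\Omega_m)=\left|\frac{N(m,\Omega_m)}{r_n(m)}-\sigma(\Omega_m)\right|.
\]
Since $r_n(m)\asymp m^{(n-2)/2}$ for $n\ge 5$ (and for $n=4$ once the exceptional $m$ are removed), it suffices to prove an asymptotic $N(m,\Omega_m)=\sigma(\Omega_m)\,r_n(m)+O(m^{(n-2)/2-\eta})$ with a power saving $\eta>0$, uniformly in $\Omega_m$.

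First I would remove the sharp cutoff of $\Omega_m$. As $\partial\Omega_m$ is piecewise smooth, for each width $\delta>0$ one can build smooth weights $\chi^{\pm}$ on $\S^{n-1}$ with $\chi^{-}\le\id_{\Omega_m}\le\chi^{+}$, agreeing with $\id_{\Omega_m}$ outside a boundary strip of width $\delta$ and satisfying $\int_{\S^{n-1}}(\chi^{+}-\chi^{-})\,d\sigma=O(\delta)$. Sandwiching $N(m,\Omega_m)$ between the weighted counts $S(m,\chi^{\pm})=\sum_{|v|^2=m}\chi^{\pm}(v/\sqrt m)$ then reduces the problem to estimating $S(m,\chi)$ for smooth $\chi$, at the cost of having to track the dependence of the eventual error on the derivatives of $\chi$, which grow like $\delta^{-1}$ per order.

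Next I would apply the circle method to $S(m,\chi)$. Extending $\chi$ to a smooth compactly supported weight $g(v)=\chi(v/|v|)\psi(|v|^2/m)$, with $\psi$ a bump at $1$, and inserting $\id[|v|^2=m]=\int_0^1 e(\alpha(|v|^2-m))\,d\alpha$ gives
\[
S(m,\chi)=\int_0^1 e(-\alpha m)\sum_{v\in\Z^n} g(v)\,e(\alpha|v|^2)\,d\alpha.
\]
Splitting $[0,1]$ into major arcs about rationals $a/q$ with $q$ small and a minor-arc complement, the major arcs produce the expected main term $\mathfrak{S}_n(m)\,\mathfrak{I}(m,\chi)$, in which the singular integral factors as $c_n\, m^{(n-2)/2}\int_{\S^{n-1}}\chi\,d\sigma$; taking $\chi\equiv 1$ identifies this with $r_n(m)$, so the main term is $r_n(m)\int_{\S^{n-1}}\chi\,d\sigma$ up to lower order. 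The minor arcs are bounded using Gauss/Weyl-sum estimates for the quadratic exponential sum $\sum_v e(\alpha|v|^2)$, which decays like $\big(q(1+m|\alpha-a/q|)\big)^{-n/2}$; paired against the Fourier data of $g$ this yields a minor-arc error of the shape $O(m^{(n-2)/2-\theta}\,\delta^{-\kappa})$ for explicit $\theta,\kappa>0$. Dividing by $r_n(m)$ and balancing the boundary error $O(\delta)$ against $O(m^{-\theta}\delta^{-\kappa})$ through the choice $\delta=m^{-\theta/(1+\kappa)}$ produces a power saving, and inserting the Malyshev--Podsypanin values of $\theta$ and $\kappa$ gives exactly the exponent $\frac{n-3}{4(3n-2)}$.

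The hard part will be the minor-arc analysis. What is needed is not merely a power saving but one that is \emph{uniform} in the domain $\Omega_m$ and carries an \emph{explicit, controlled} dependence on the smoothing scale $\delta$; this forces the use of the sharpest available Weyl-type bounds for quadratic forms in $n\ge 4$ variables, together with a careful treatment of the singular series $\mathfrak{S}_n(m)$ (in particular its mild, divisor-type fluctuations when $n=4$). Converting the Fourier decay of the smooth weight $\chi$ into a genuine gain on the minor arcs, and then optimizing that gain against $\delta$, is exactly where the estimates of Malyshev and Podsypanin do the work and where the specific exponent $\frac{n-3}{4(3n-2)}$ is pinned down.
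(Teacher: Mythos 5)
The paper does not prove this proposition: it is imported verbatim from Malyshev and Podsypanin, and the only ``proof'' in the source is the citation. So the relevant question is whether your sketch would establish the stated bound if those references were unavailable --- and it would not. Your outline correctly identifies the standard architecture (reduce the discrepancy to the lattice-point count $N(m,\Omega_m)$, smooth the indicator of $\Omega_m$ at scale $\delta$, run the circle method on the smoothed count, and balance the boundary loss $O(\delta)$ against a minor-arc loss $O(m^{-\theta}\delta^{-\kappa})$), but the entire quantitative content of the proposition lives in the values of $\theta$ and $\kappa$, and you never derive them: the sentence ``inserting the Malyshev--Podsypanin values of $\theta$ and $\kappa$ gives exactly the exponent $\frac{n-3}{4(3n-2)}$'' is circular, since it appeals to the cited theorems for the one piece of information that distinguishes this statement from a generic ``there exists a power saving.''

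Two places where the sketch would actually fail as written. First, for $n=4$ the naive minor-arc bound $\bigl(q(1+m|\alpha-a/q|)\bigr)^{-n/2}$ is exactly at the boundary of summability: integrating it over the minor arcs does not beat the main term $m^{(n-2)/2}=m$ without a Kloosterman-type refinement or input from the theory of modular forms, and the singular series for quaternary forms genuinely fluctuates (hence the paper's exclusion of $m$ with small $r_4(m)$). Your parenthetical nod to ``mild, divisor-type fluctuations'' does not address this; for $n=4$ the positivity of $\theta$ is itself the hard theorem. Second, the claimed uniformity in $\Omega_m$ (the implicit constant independent of the domain) requires the mollification $\chi^{\pm}$ to have derivative bounds depending only on $\delta$, uniformly over all convex domains with piecewise smooth boundary; the constant in $\int(\chi^{+}-\chi^{-})\,d\sigma=O(\delta)$ a priori depends on curvature and on the number of smooth boundary pieces, and convexity must be invoked to control this. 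In short, your proposal is a reasonable description of how the cited results are obtained, but as a proof it has a genuine gap precisely at the step that produces the exponent.
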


Given these estimates we immediately use Proposition \ref{prop: W_2 and discrepancy} to obtain the following theorem.
\begin{theorem}
\label{thm:arithmetic coupling}
Let $n\ge 2$ be the dimension, $k$ be the degree of smoothness, and $\epsilon>0$ be an arbitrary constant. Let us consider $m\to \infty$ such that $m\in S_n$ where $S_n$ is the collection of $m$ such that $r_n(m)\to \infty$. Then there is a coupling between $\rho_m$ and $\sigma$ such that for every $R>1$
\[
\sigma_R^2\le 
\begin{cases}
CR^2(\log m)^{-2\kappa/3+\epsilon} & n=2,\\
CR^2m^{-1/350+\epsilon}& n= 3,\\
CR^2m^{-\frac{n-3}{2(3n-2)(n+1)}+\epsilon}& n \ge 4,
\end{cases}
\]
where $\kappa$  as in Propositions \ref{erdhalprop} and $C$ is a constant independent of $m$ and $\epsilon$.
\end{theorem}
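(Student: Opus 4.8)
The plan is to chain together the three tools assembled above. The starting point is \eqref{eq: arithmetic sigma}, which reduces the whole problem to bounding the transport distance $W_2^2(\rho_m,\sigma)$: once I show $W_2^2(\rho_m,\sigma)\le C\,D(m)^{2/(n+1)}$, where $D(m)$ denotes the relevant discrepancy bound, the factor $R^2$ and the constant $C$ carry over verbatim and the theorem follows. So the entire task is an estimate of $W_2^2(\rho_m,\sigma)$ through Proposition \ref{prop: W_2 and discrepancy}, fed by the discrepancy inputs of Propositions \ref{erdhalprop}, \ref{prop: DSP} and \ref{prop: Malyshev}.

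First I would fix a scale $r\in(0,1)$ and partition $\S^{n-1}$ into pieces $\Omega_j$ of diameter at most $r$ that are admissible for the relevant discrepancy proposition (arcs when $n=2$, convex geodesic boxes with piecewise smooth boundary when $n\ge 3$). A standard geodesic grid provides such a partition with cardinality $N\le C r^{-(n-1)}$, the constant depending only on $n$. Writing $D(m)$ for the uniform discrepancy bound on a single piece, Proposition \ref{prop: W_2 and discrepancy} gives
\[
W_2^2(\rho_m,\sigma)\le C\br{r^2+\sum_j\Delta(\Omega_j)}\le C\br{r^2+r^{-(n-1)}D(m)}.
\]
Next I would optimise in $r$: the two terms balance at $r^{n+1}\asymp D(m)$, i.e. $r\asymp D(m)^{1/(n+1)}$, which is admissible since $D(m)\to0$, and substituting yields $W_2^2(\rho_m,\sigma)\le C\,D(m)^{2/(n+1)}$.

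Then I would plug in each discrepancy bound and read off the exponents. With $D(m)=(\log m)^{-\kappa+\epsilon}$ and $n+1=3$ one gets $(-\kappa)\cdot\tfrac23=-2\kappa/3$; with $D(m)=m^{-1/175+\epsilon}$ and $n+1=4$ one gets $(-1/175)\cdot\tfrac12=-1/350$; and with $D(m)=m^{-\frac{n-3}{4(3n-2)}+\epsilon}$ for $n\ge4$ one gets $-\frac{n-3}{4(3n-2)}\cdot\frac{2}{n+1}=-\frac{n-3}{2(3n-2)(n+1)}$. In each case the arbitrary $\epsilon$ in the discrepancy exponent is multiplied by a fixed positive constant $2/(n+1)$ and then reabsorbed into a fresh $\epsilon$. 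In dimension $n=2$ the estimate is asserted along the density-one subsequence of Proposition \ref{erdhalprop}, whereas for $n\ge3$ it holds for all $m\in S_n$.

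The only genuinely delicate point is the partition step. The discrepancy estimates apply only to admissible sets, and summing $N\asymp r^{-(n-1)}$ of them requires the discrepancy bound to be \emph{uniform} over all pieces of the grid. This uniformity is explicit in Proposition \ref{prop: Malyshev} (the implied constant is independent of $\Omega$) and in Proposition \ref{erdhalprop} (the bound holds for an arbitrary arc), while for Proposition \ref{prop: DSP} one needs the estimate uniformly over the geodesic boxes of the grid, which is available since they form a fixed class of convex sets with piecewise smooth boundary. Once uniformity is secured, the remaining content is the elementary one-variable optimisation above, and I do not anticipate further obstacles.
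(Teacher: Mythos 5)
Your proposal is correct and follows essentially the same route as the paper: partition the sphere into $\asymp r^{-(n-1)}$ admissible pieces of diameter $\asymp r$, apply Proposition \ref{prop: W_2 and discrepancy} together with the discrepancy inputs of Propositions \ref{erdhalprop}, \ref{prop: DSP} and \ref{prop: Malyshev}, and balance $r^2$ against $r^{-(n-1)}D(m)$ at $r\asymp D(m)^{1/(n+1)}$ before invoking \eqref{eq: arithmetic sigma}. The resulting exponents agree with the paper's choices of $r$ in each dimension.
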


\begin{proof}
This theorem follows immediately from the discrepancy estimates and Proposition \ref{prop: W_2 and discrepancy}. Indeed, for a small $r>0$ one can partition $\S^{n-1}$ into domains $\Omega_j$ in such a way that each $\Omega_j$ has piecewise smooth boundary,  diameter comparable to $r$ and such that their number is comparable to $r^{-n+1}$. In case $n=2$, $\Omega_j$ are just arcs on $\S^1$. By  Propositions \ref{prop: W_2 and discrepancy}, \ref{erdhalprop}, \ref{prop: DSP} and \ref{prop: Malyshev} we have that 
\[
W_2^2(\rho_m,\sigma)\le C (r^2+\sum_j \Delta(\Omega_j))
\le 
\begin{cases}
C(r^2+r^{-1}(\log m)^{-\kappa+\epsilon}) & n=2,\\
C(r^2+r^{-2}m^{-1/175+\epsilon})& n= 3,\\
C(r^2+r^{-n+1}m^{-\frac{n-3}{4(3n-2)}+\epsilon})& n\ge 4.
\end{cases}\]
Taking $r=(\log m)^{-\kappa/3}$ if $n=2$, $r=m^{-1/175\cdot 4}$ if $n=3$ and $r=m^{-\frac{n-3}{4(3n-2)(n+1)}}$ if $n\ge 4$ and recalling \eqref{eq: arithmetic sigma} we complete the proof.
\end{proof}

In dimension $d=3$ we can improve the result of Theorem \ref{thm:arithmetic coupling} if we assume the following conjecture which is an extension of Bourgain-Sarnak-Rudnick's result \cite[Theorem 1.7]{bsr016} for shrinking spherical caps and segments to more general shrinking regions.
\begin{conjecture}
\label{theconj}
Assume that $n=3$ and $m\in S$. Let $A_m$ be a family of sets on the sphere which depend on $m$. We assume that $A_m$ is almost round in the sense that there are absolute constants $c_1$ and $c_2$ such that $A_m$ contains a spherical cap of radius $c_1\sigma(A_m)^{1/2}$ and is contained in a spherical cap of radius $c_2\sigma(A_m)^{1/2}$. We also assume that $r(m)^{-1+\delta}\lesssim \sigma(A_m)\lesssim r(m)^{-\delta}$ for some $\delta>0$.  Then for every $\epsilon>0$
\[
    \int_{SO(3)}\Delta^2(gA_m)dg=O(m^\epsilon r(m)^{-1}\sigma(A_m)).
\]
\end{conjecture}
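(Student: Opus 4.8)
The plan is to perform the rotational average first and diagonalise the resulting quadratic form in the basis of spherical harmonics, so that the \emph{shape} of $A_m$ enters only through the radial profile of its harmonic energy while the \emph{arithmetic} of $\Lambda_m$ enters only through the Weyl sums of the lattice points. Write $N=r(m)$, let $\lambda_1,\dots,\lambda_N$ be the points of $\Lambda_m$, and set $\psi=\id_{A_m}-\sigma(A_m)$, a mean-zero function on $\S^2$ with $\int\psi^2\,d\sigma=\sigma(A_m)(1-\sigma(A_m))$. Since $\Delta(gA_m)=\frac1N\sum_i\psi(g^{-1}\lambda_i)$, expanding $\psi=\sum_{\ell\ge1}\sum_{|q|\le\ell}b_{\ell,q}Y_{\ell,q}$ in $\sigma$-orthonormal harmonics and integrating over $SO(3)$ by Schur orthogonality (equivalently the addition theorem $\sum_qY_{\ell,q}(x)\overline{Y_{\ell,q}(y)}=(2\ell+1)P_\ell(\langle x,y\rangle)$) yields the exact identity
\[
\int_{SO(3)}\Delta^2(gA_m)\,dg=\frac1{N^2}\sum_{\ell\ge1}E_\ell\,W_\ell,\qquad
E_\ell:=\sum_{|q|\le\ell}|b_{\ell,q}|^2,\quad
W_\ell:=\sum_{i,j}P_\ell(\langle\lambda_i,\lambda_j\rangle)=\tfrac1{2\ell+1}\sum_{|q|\le\ell}\Big|\sum_iY_{\ell,q}(\lambda_i)\Big|^2\ge0,
\]
where $E_\ell$ is the harmonic energy of the region in degree $\ell$ and $W_\ell$ is the lattice Weyl energy. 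Parseval gives $\sum_{\ell\ge1}E_\ell=\sigma(A_m)(1-\sigma(A_m))\le\sigma(A_m)$.

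The diagonal part of $W_\ell$ (the terms $i=j$, each equal to $P_\ell(1)=1$) already produces the main term: it contributes $\frac1{N^2}\sum_\ell E_\ell\cdot N=\frac1N\sum_\ell E_\ell\le r(m)^{-1}\sigma(A_m)$, which is \emph{exactly} the right-hand side of the conjecture, with no loss. Everything therefore reduces to showing that the off-diagonal contribution is no larger, i.e.\ that $W_\ell\ll_\epsilon m^\epsilon N$ for every $\ell$ carrying a non-negligible share of the energy $E_\ell$: on the range where this holds one gets $\frac1{N^2}\sum_\ell E_\ell W_\ell\ll\frac{m^\epsilon}{N}\sum_\ell E_\ell\le m^\epsilon r(m)^{-1}\sigma(A_m)$, which is the claim. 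Note that if $W_\ell\ll_\epsilon m^\epsilon N$ held for \emph{all} $\ell$ the shape of $A_m$ would be irrelevant; the hypotheses on $A_m$ are needed precisely to confine $E_\ell$ to the range where this arithmetic bound is accessible.

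It then remains to combine two inputs. The \emph{geometric} input is that, $A_m$ being almost round, a spherical cap of radius $\asymp\sigma(A_m)^{1/2}$ is inscribed in and circumscribes $A_m$; this should force $E_\ell$ to match the profile of a cap, peaking at $\ell\asymp\sigma(A_m)^{-1/2}$ with a power-law tail $E_\ell\asymp\sigma(A_m)^{1/2}\ell^{-2}$, so that essentially all the energy sits at degrees $\ell\lesssim\sigma(A_m)^{-1/2}\lesssim r(m)^{(1-\delta)/2}$. The \emph{arithmetic} input is the square-root-cancellation bound $W_\ell\ll_\epsilon m^\epsilon N$ throughout the range carrying this energy, i.e.\ the statement that $\Lambda_m$ is ``as equidistributed as a random point set'' on those frequencies; this is the content, for the special test functions given by caps and segments, of the Bourgain--Sarnak--Rudnick theorem, proved via the spectral decomposition of theta/Poincar\'e series and the Ramanujan--Petersson bound. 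The two size conditions $r(m)^{-1+\delta}\lesssim\sigma(A_m)\lesssim r(m)^{-\delta}$ are exactly what keeps the relevant degrees admissible: the lower bound holds $\sigma(A_m)^{-1/2}$ below $r(m)^{1/2}$, and the upper bound keeps the region genuinely shrinking.

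The hard part will be the arithmetic input at the edge of its range. For caps one has the BSR bound, but upgrading it to $W_\ell\ll_\epsilon m^\epsilon N$ \emph{uniformly in $\ell$}, with genuine square-root cancellation on average over the $2\ell+1$ harmonics of each degree, up through the intermediate frequencies $\sigma(A_m)^{-1/2}\lesssim\ell\lesssim r(m)$ (beyond which the oscillation scale $1/\ell$ outruns the lattice spacing $\asymp m^{-1/2}$ and elementary separation estimates already give $W_\ell\asymp N$), is exactly the frontier of equidistribution results on spheres, where Hecke-type clustering can obstruct the expected cancellation; this is what makes the statement a conjecture rather than a theorem. A secondary difficulty is geometric: an almost-round region need not have a smooth boundary, so establishing the cap-like decay of the tail of $E_\ell$ (rather than merely reducing to caps) requires an honest estimate on the harmonic content of rough almost-round sets, since the trivial bound $W_\ell\le N^2$ on the unreached tail loses a factor $N$ and a crude cap-sandwiching of $\Delta^2$ loses a factor $\sigma(A_m)$.
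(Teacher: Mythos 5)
The statement you are addressing is presented in the paper as Conjecture \ref{theconj}: the paper gives no proof of it, only the remark that it would extend the Bourgain--Sarnak--Rudnick variance bound \cite{bsr016} from shrinking caps and segments to general almost-round shrinking sets. So there is no proof in the paper to compare yours against, and the relevant question is whether your argument actually establishes the statement. It does not. Your harmonic decomposition is correct and is the natural (indeed the standard) way to attack such variance bounds: the identity $\int_{SO(3)}\Delta^2(gA_m)\,dg=N^{-2}\sum_{\ell\ge1}E_\ell W_\ell$ is right, the diagonal of $W_\ell$ does give exactly the claimed main term $r(m)^{-1}\sigma(A_m)(1-\sigma(A_m))$, and the reduction of the whole conjecture to the bound $W_\ell\ll_\epsilon m^\epsilon N$ on the degrees carrying the harmonic energy of $A_m$ is a genuine and useful reformulation.

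But that bound is precisely the open arithmetic content of the conjecture, and you do not prove it --- you say so yourself (``this is what makes the statement a conjecture rather than a theorem''). Square-root cancellation for the Weyl sums $\sum_{\lambda\in\Lambda_m}Y_{\ell,q}(\lambda)$, uniformly (or even on average over $q$) through the range $\ell\lesssim\sigma(A_m)^{-1/2}$, is not a consequence of \cite{bsr016}; it is exactly the kind of statement that lies beyond what Duke's and Iwaniec's bounds deliver, and Hecke-type clustering of $\Lambda_m$ is a concrete obstruction to obtaining it for individual $\ell$ near the top of the admissible range. In addition, the geometric half of your argument --- that an almost-round set with possibly rough boundary has cap-like harmonic energy decay $E_\ell\lesssim\sigma(A_m)^{1/2}\ell^{-2}$ --- is asserted rather than proved; sandwiching by caps controls $\sigma(A_m)$ but does not by itself control the tail of $E_\ell$, and without that control the trivial bound $W_\ell\le N^2$ on the unreached degrees loses a factor of $N$, as you note. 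What you have is a correct and well-motivated reduction together with an honest list of the two missing inputs, not a proof; this is consistent with the paper, which likewise leaves the statement as a conjecture.
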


For each $m$ one can partition $S^2$ into domains $A_j$ such that they all are `almost round' and their area is of the same order $r^2$. Then we have 
\[
\int_{SO(3)}\sum_j\Delta^2(gA_j)dg=O(m^\epsilon r(m)^{-1}).
\]
This implies that there is $g\in SO(3)$ such that 
\[
\sum_j\Delta^2(gA_j)dg=O(m^\epsilon r(m)^{-1}).
\]
Without loss of generality we can assume that $g$ is the identity (otherwise instead of the original partition we take the rotated one). This means that there is a partition such that the sum of squared discrepancies is small. 

Then we have
\[
\sum \Delta(A_j)\le \br{\sum \Delta^2(A_j)\sum 1}^{1/2}=O(m^{\epsilon/2}r(m)^{-1/2}r^{-1})=O(m^{-1/4+\epsilon}r^{-1}).
\]
As before, we have to optimize
\[
r^2+\sum\Delta(A_j)=r^2+O(m^{-1/4+\epsilon}r^{-1}).
\]
So we take $r=m^{-1/12}$ (which corresponds to $\sigma=m^{-1/6}$ and satisfies the assumption of Conjecture \ref{theconj}). This leads to
\[
W_2^2(\rho_m,\sigma)\le Cm^{-1/6+\epsilon}
\]
and
\[
\sigma^2_R\le CR^2m^{-1/6+\epsilon}.
\]

\bibliography{coupling}{}
\bibliographystyle{plain}



\end{document}